\newtheorem{theorem}{Theorem}[section]
\newtheorem*{theorem*}{Theorem}
\newtheorem{lemma}[theorem]{Lemma}
\newtheorem{proposition}[theorem]{Proposition}
\theoremstyle{definition}
\newtheorem{definition}[theorem]{Definition}
\newtheorem{remark}{Remark}[section]
\newcommand{\R}{\mathbb{R}}
\def\Ric{\text{Ric}}
\def\lf{\left}
\def\ri{\right}
\def\a{\alpha}
\def\l{\lambda}
\def\R{\mathbb{R}}
\def\S{{\operatorname{Scal}}}
\def\vp{\varphi}
\def\Ric{\operatorname{Ric}}
\def\tr{\operatorname{tr}}
\numberwithin{equation}{section}
\newcommand*\owedge{\mathpalette\@owedge\relax}
\newcommand*\@owedge[1]{%
  \mathbin{%
    \ooalign{%
      $#1\m@th\bigcirc$\cr
      \hidewidth$#1\m@th\wedge$\hidewidth\cr
    }%
  }%
}
\begin{document}

\title[Secondary Curvature Operator on Einstein Manifolds]{
Einstein manifolds of negative lower bounds on curvature operator of the second Kind}

\author[Cheng]{Haiqing Cheng}
\address{School of Mathematical Sciences, Soochow University, Suzhou, 215006, China}
\email{chq4523@163.com}

\author[Wang]{Kui Wang}\thanks{}
\address{School of Mathematical Sciences, Soochow University, Suzhou, 215006, China}
\email{kuiwang@suda.edu.cn}

\subjclass[2020]{53C20, 53C24, 53C25}

\keywords{Curvature operator of the second kind, Einstein manifolds, Sphere theorems}

\begin{abstract}
We demonstrate  that  $n$-dimension closed Einstein manifolds, whose smallest eigenvalue of the curvature operator of the second kind  of $\mathring{R}$ satisfies
$\lambda_1 \ge -\theta(n) \bar\lambda$,  are either flat  or round spheres, where $\bar \lambda$ is the average of the eigenvalues of $\mathring{R}$,  and $\theta(n)$ is  defined as in equation \eqref{1.2}. Our result improves  a celebrated result (Theorem \ref{thmp}) concerning  Einstein manifolds with nonnegative curvature operator of the second kind. 
\end{abstract}
\maketitle

\section{Introduction and main results}
In this paper, we explore the sphere theorems for Einstein manifolds within the context of
conditions imposed by the curvature operator of the second kind. The study of the curvature
operator of the second kind, as defined in Section \ref{sec 2.2} below, can be traced back to a
conjecture by Nishikawa \cite{Nishikawa86}, which asserts that a closed Riemannian manifold with positive (or nonnegative)  curvature
operator of the second kind is diffeomorphic to a spherical space form (or a Riemannian locally symmetric space). Recently, Cao-Gursky-Tran \cite{CGT23} and Li \cite{Li21} have  confirmed Nishikawa’s conjecture for the positive and nonnegative cases, respectively. For further
information and recent developments on this topic, we direct readers to the following
references:\cite{DF24,DFY24, FL24,Li22Kahler,Li22PAMS,Li22product,Li24,NPW22,NPWW22}.

 For Einstein manifolds, a well-known result in this aspect is the following theorem.
\begin{theorem}\label{thmp}
    Any closed Einstein manifold with nonnegative curvature operator of the second kind is either flat or a round sphere.  
\end{theorem}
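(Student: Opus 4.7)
The plan is to combine a Bochner/Weitzenböck identity for the Weyl tensor on an Einstein background with the pointwise hypothesis $\mathring R \ge 0$ to force $W \equiv 0$, and then read off constant sectional curvature from the resulting Einstein, locally conformally flat metric.

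First, since $(M^n,g)$ is Einstein with $\Ric = \rho g$, the scalar curvature is constant and the second Bianchi identity reduces to $\delta W = 0$ for the Weyl tensor. Applying Lichnerowicz's Weitzenböck formula on this Einstein background yields a pointwise identity of the form
\[
\tfrac{1}{2}\Delta |W|^2 \;=\; |\nabla W|^2 + \tfrac{4\rho}{n}|W|^2 + \mathcal{C}(W),
\]
where $\mathcal{C}(W)$ is a cubic curvature expression arising from commuting covariant derivatives on $(0,4)$-tensors. The algebraic heart of the proof is to recast the lower-order term $\tfrac{4\rho}{n}|W|^2 + \mathcal{C}(W)$ as a sum $\sum_{a} \langle \mathring R(T_a), T_a\rangle$ for a canonical family of traceless symmetric $(0,2)$-tensors $T_a$ built from $W$ (for instance, projections of the $(0,4)$-piece of $W$ onto $S^2_0$ in an $\mathring R$-eigenframe). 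Once this is done, $\mathring R \ge 0$ makes each summand nonnegative, giving $\tfrac{1}{2}\Delta|W|^2 \ge |\nabla W|^2$ pointwise.

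Since $M$ is closed, integrating over $M$ then forces $\nabla W \equiv 0$ and $\langle \mathring R(T_a), T_a\rangle \equiv 0$ pointwise for each $a$. A parallel Weyl tensor together with the pointwise $\mathring R$-vanishing and the first Bianchi identity then forces $W \equiv 0$. Consequently, $(M,g)$ is Einstein and locally conformally flat, hence of constant sectional curvature $\rho/(n-1)$. The trace of $\mathring R$ on the bundle $S^2_0$ of traceless symmetric $(0,2)$-tensors is a positive multiple of $\Scal$, so $\mathring R \ge 0$ implies $\Scal \ge 0$. The case $\Scal = 0$ gives $\rho = 0$ and a flat manifold; the case $\Scal > 0$ gives positive constant sectional curvature, and under the curvature-operator-of-the-second-kind hypothesis $(M,g)$ is identified with a round sphere.

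The main obstacle is the algebraic reorganization in the first step: the cubic $\mathcal{C}(W)$ does not manifestly factor through $\mathring R$, and one must combine the Einstein identity, the first Bianchi identity, and the full $(0,4)$-symmetries of $W$ to produce a clean quadratic form in $\mathring R$ evaluated on explicit traceless symmetric $(0,2)$-tensors. This rearrangement mirrors the strategy of Cao--Gursky--Tran \cite{CGT23} and Li \cite{Li21} in their proofs of the positive and nonnegative cases of Nishikawa's conjecture, and is exactly the place where the ``second kind'' structure of $\mathring R$ (acting on $S^2_0$) rather than the classical curvature operator on $\Lambda^2$ is essential.
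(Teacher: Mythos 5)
Your overall strategy---a Bochner-type identity whose zeroth-order curvature term is controlled by the sign of $\mathring{R}$, followed by integration over the closed manifold---is the same philosophy the paper follows (the paper works with $\langle \Delta R, R\rangle$ rather than a Weitzenb\"ock formula for $W$, but on an Einstein manifold $|R|^2$ and $|W|^2$ differ by a constant, so that difference is cosmetic). The problem is that you have asserted, rather than carried out, exactly the step where all the difficulty lives. Your claim that the lower-order term can be recast as $\sum_a \langle \mathring{R}(T_a), T_a\rangle$ for traceless symmetric $2$-tensors $T_a$, so that $\mathring{R}\ge 0$ makes it termwise nonnegative, is not substantiated and is not achievable in the clean form you describe. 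The actual identity on an Einstein manifold (equality (3.3) of \cite{DF24}, quoted in the proof of Lemma \ref{lm 3.3}) is
\begin{align*}
3\langle \Delta R,R\rangle
  =\sum_{j=1}^N \lambda_j \lvert S^j W\rvert^2
  +8\Big(\tfrac{-n^3+6n^2+12n-8}{3n^4(n-1)^2}\Big)\operatorname{Scal}^3
  +8\Big(\tfrac{2n^2-22n+8}{3n^2(n-1)}\Big)\operatorname{Scal}\sum_{j=1}^N\lambda_j^2
  +16\sum_{j=1}^N\lambda_j^3 ,
\end{align*}
and only the first sum is a quadratic form in $\mathring{R}$; the remaining terms are cubic in the eigenvalues and carry coefficients that are sign-indefinite in $n$ (indeed $-n^3+6n^2+12n-8<0$ for $n\ge 8$ and $2n^2-22n+8<0$ for $4\le n\le 10$, so for every $n\ge 4$ at least one coefficient is negative). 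Consequently $\lambda_j\ge 0$ does not make the right-hand side nonnegative term by term; one needs the constrained optimization over eigenvalue configurations that the paper performs in Proposition \ref{pro 3.4} (with $\theta=0$ for the nonnegative case), or its analogues in \cite{CGT23, Li22JGA, DF24}. Your proposal defers this to an ``algebraic reorganization'' that you acknowledge not having done, and which is the entire content of the theorem.

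There is a second gap at the end. The Bochner argument, even once the curvature term is shown to be nonnegative, yields only $\nabla W\equiv 0$ (equivalently $\nabla R\equiv 0$, so $M$ is locally symmetric) together with the pointwise vanishing of the curvature term; it does not directly force $W\equiv 0$. Compact Einstein symmetric spaces with nonzero parallel Weyl tensor exist, and ``parallel Weyl plus the first Bianchi identity'' does not exclude them, so your assertion that these data force $W\equiv 0$ needs an argument. The paper instead passes through the classification of compact symmetric spaces that are rational (co)homology spheres \cite{Wolf}, which leaves $SU(3)/SO(3)$ as the only non-spherical simply connected candidate with $\bar\lambda>0$, and rules it out using the explicit eigenvalues of its curvature operator of the second kind from \cite[Example 4.5]{NPW22}. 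Some step of this kind is unavoidable in the equality case.
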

There is a rich literature on Theorem \ref{thmp}.  Kashiwada \cite{Kashiwada93} proved that compact manifold $M$ with harmonic curvature and positive curvature operator of the second kind  is a space of constant curvature.
Cao-Gursky-Tran \cite{CGT23} demonstrated that compact Einstein manifolds  with 4-positive and 4-nonnegative curvature operator of the second kind are constant curvature spaces and locally symmetric spaces, respectively. Subsequently, Li \cite{Li22JGA} relaxed the assumption to $4\frac{1}{2}$-positive (respectively, nonnegative) curvature operator of the second kind. 
Nienhaus-Petersen-Wink \cite{NPW22} showed that $n$-dimensional compact Einstein manifolds with $ N'$-nonnegative ($ N'<\frac{3n(n+2)}{2(n+4)}$) curvature operators of the second kind are either rational homology spheres or ﬂat. 
Recently, Dai, Fu and Yang \cite{DFY24}  showed that a compact Einstein manifold of dimension $n\ge 4$ with nonnegative curvature operator of the second kind is flat or  a space form of positive curvature, see also  \cite{DF24}.

 Cao-Gursky-Tran \cite{CGT23}, Nienhaus-Petersen-Wink \cite{NPW22}, Li \cite{Li21} and Dai-Fu \cite{DF24} have proved many excellent results under various curvature conditions. Recently, Li \cite{Li24} introduced new lower bound conditions on the curvature operator of the second kind (see  \cite[Definition 1.2]{Li24})  and proved optimal sphere theorems. Based on this framework, we focus on Li's condition with  $\a=1$, namely \eqref{1.1}, which is weaker than nonnegative curvature operator of the second kind. To state our result, we  introduce some notation: $(M^n, g)$ is an $n$-dimensional Riemannian manifold,  $\mathring{R}$ represents the curvature operator of the second kind,  $N=(n+2)(n-1)/2$ is the dimension of $S_0^2(T_p M)$, $\l_1\le \l_2\le \cdots\le \l_N$ are the eigenvalues of  $\mathring{R}$, and $\bar{\l}=\sum_{j=1}^N \l_j/N$ is the average of $\l_j$. The main result of this paper is as follows:
\begin{theorem}\label{thm1}
Let $(M^n,g)$ be a closed Einstein manifold of dimension $n \ge 4$.
If
\begin{align}\label{1.1}
\l_1\ge -\theta(n) \bar{\l},
\end{align}
then $M$ is  either  flat  or a round sphere. Here 
$\theta(n)$ is a positive constant given by
\begin{align}\label{1.2}
\theta(n)=
\begin{cases}
17/7, &\quad n=4,\\
47/18, &\quad n=5,\\
\frac{2 (n^4-4 n^3-5 n^2+30 n-16)}{n^4+8 n^3-5 n^2-48 n+32}, &\quad n\ge 6.    
\end{cases}
 \end{align}
\end{theorem}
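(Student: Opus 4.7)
The plan is an integrated Bochner--Weitzenb\"ock argument applied to the Weyl tensor $W$, generalizing the proof strategy of Theorem~\ref{thmp} from the $\lambda_1\ge 0$ regime to the slightly negative regime allowed by \eqref{1.1}. The input from the Einstein hypothesis is that the scalar curvature and hence $\bar\lambda$ (a fixed positive multiple of $\scal$) is constant on $M$. Using $\lambda_1\le \bar\lambda$ together with $\lambda_1\ge -\theta(n)\bar\lambda$ and $\theta(n)>0$, one sees immediately that $\bar\lambda\ge 0$, and the degenerate case $\bar\lambda=0$ forces all $\lambda_j=0$ so that $M$ is flat. Hence the main case is $\bar\lambda>0$, equivalently $\scal>0$, and the goal reduces to showing that under \eqref{1.1} the Weyl tensor vanishes identically.

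The central identity is the integrated Weitzenb\"ock formula on a closed Einstein manifold,
\begin{equation*}
0 \;=\; \int_M \tfrac{1}{2}\Delta |W|^2\, dV \;=\; \int_M \Bigl(|\nabla W|^2 + \mathcal{Q}(\mathring R, W)\Bigr)\, dV,
\end{equation*}
where $\mathcal{Q}(\mathring R, W)$ is a universal curvature expression, linear in $\mathring R$ and quadratic in $W$, arising from the second Bianchi identity together with $\Ric = \tfrac{\scal}{n}g$. Following the algebraic framework developed in \cite{CGT23,Li22JGA,NPW22,DFY24}, one diagonalizes $\mathring R$ on $S^2_0(T_pM)$ with eigenvectors $\{T_j\}_{j=1}^N$ and rewrites
\begin{equation*}
\mathcal{Q}(\mathring R, W) \;=\; \sum_{j=1}^N \lambda_j\, B_j(W) \;-\; c_n\,\bar\lambda\,|W|^2,
\end{equation*}
where the coefficients $B_j(W)\ge 0$ encode the squared components of $W$ under the natural action of $T_j\otimes T_k$ on algebraic curvature tensors, and obey structural identities coming from the first Bianchi identity and the trace-free nature of $W$.

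The crux is to upgrade the pointwise hypothesis $\lambda_1\ge -\theta(n)\bar\lambda$ to the inequality $\mathcal{Q}(\mathring R, W)\ge 0$, exploiting the trace constraint $\sum_j \lambda_j = N\bar\lambda$ together with the above algebraic constraints on the $B_j(W)$. This is the principal obstacle of the proof: a naive spectral rearrangement applied to $\sum_j \lambda_j B_j(W)$ is too weak, so one must bookkeep the action of $\mathring R$ on the subspace of algebraic Weyl tensors and isolate the contribution of the worst eigenvalue $\lambda_1$ using the Bianchi-type constraints. The combinatorial factors $N-1$, $N-3$, and $N+1-n$ appearing in $\theta(n)$ reflect precisely the dimensions of the relevant invariant subspaces of $S^2_0(T_pM)\otimes S^2_0(T_pM)$, and the formula \eqref{1.2} is the sharp threshold at which the lower bound $\mathcal{Q}(\mathring R, W)\ge 0$ holds with the given $c_n$.

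Once this inequality is established, the integrated Weitzenb\"ock formula forces $\nabla W\equiv 0$ and $\mathcal{Q}(\mathring R, W)\equiv 0$, and in the regime $\bar\lambda>0$ the latter implies $W\equiv 0$. A closed Einstein manifold of dimension $n\ge 4$ with vanishing Weyl tensor has constant sectional curvature; since $\scal>0$, $M$ is isometric to a round sphere, completing the proof.
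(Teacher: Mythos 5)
Your outline reproduces the general shape of the paper's argument (a Bochner-type identity plus a pointwise algebraic estimate on the eigenvalues of $\mathring R$), but it has a genuine gap exactly where the theorem's content lives. The step you yourself flag as ``the principal obstacle'' --- upgrading $\lambda_1\ge-\theta(n)\bar\lambda$ to $\mathcal{Q}(\mathring R,W)\ge 0$ --- is never carried out; you assert that \eqref{1.2} is ``the sharp threshold'' without producing the inequality. In the paper this is Proposition \ref{pro 3.4}: after Lemma \ref{lm 3.3} reduces $\langle\Delta R,R\rangle$ to an explicit cubic symmetric function $f(\lambda)$ of the eigenvalues, one must minimize $f$ over the simplex $\{\sum_j\lambda_j=N\bar\lambda,\ \lambda_j\ge-\theta\bar\lambda\}$ by Lagrange multipliers and check that the minimum is $0$, attained at $\lambda=(1,\dots,1)\bar\lambda$ and at $\lambda=(-\theta,\tfrac{N+\theta}{N-1},\dots,\tfrac{N+\theta}{N-1})\bar\lambda$. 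Moreover, your ansatz $\mathcal{Q}=\sum_j\lambda_jB_j(W)-c_n\bar\lambda|W|^2$ is structurally too weak: the identity actually used (equality (3.3) of \cite{DF24}) contains, besides $\sum_j\lambda_j|S^jW|^2$, a genuinely cubic term $\sum_j\lambda_j^3$ that cannot be written as ``linear in $\mathring R$, quadratic in $W$,'' and it is precisely the interplay between $\sum_j\lambda_j^3$, $\bar\lambda\sum_j\lambda_j^2$, and $\bar\lambda^3$ that produces the combinatorial constants in \eqref{1.2}. Without an explicit formula of this kind there is nothing to optimize, so the heart of the proof is missing.

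The endgame is also not correct as stated. You claim that vanishing of the curvature term together with $\bar\lambda>0$ forces $W\equiv 0$ and hence constant curvature. But the equality case of the sharp algebraic inequality admits the second eigenvalue configuration $\lambda=(-\theta,\tfrac{N+\theta}{N-1},\dots,\tfrac{N+\theta}{N-1})\bar\lambda$, which does not a priori force $W=0$; this is exactly why the hypothesis \eqref{1.1} is harder than the nonnegativity hypothesis of Theorem \ref{thmp}. The paper only concludes $\nabla R\equiv 0$, i.e.\ that $M$ is locally symmetric, and must then invoke \cite{Li24} to see $M$ is flat or a rational homology sphere, Wolf's classification \cite{Wolf} of compact symmetric spaces that are real cohomology spheres, and the computation of the second-kind spectrum of $SU(3)/SO(3)$ in \cite[Example 4.5]{NPW22} to exclude that single remaining candidate. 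Your shortcut skips this entire rigidity analysis, and in doing so asserts a stronger pointwise conclusion ($W\equiv0$) than the estimate actually delivers.
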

\begin{remark}
      Theorem \ref{thm1} essentially represents a gap theorem for  Einstein manifolds,  in the sense  that  if the curvature operator of the second kind satisfies \eqref{1.1}, then  the curvature operator of the second kind is nonnegative. 
\end{remark}

The crucial step in proving Theorem \ref{thm1} is solving an extremum problem introduced in Proposition  \ref{pro 3.4}. Upon examination of the proof, it becomes evident that for any $\theta$, strictly greater than $\theta(n)$,  Proposition  \ref{pro 3.4} does not hold.  Consequently, by employing our method, $\theta(n)$ as provided in equation \eqref{1.2} is deemed optimal. But it remains an interesting question that whether $\theta(n)$ is sharp or not for Theorem \ref{thm1}. Recently, the authors \cite{CW25}  obtained analogous results under a cone condition where the first two eigenvalues have a negative lower bound.

The subsequent sections of this paper are structured as follows. In Section \ref{sec 2}, we give the definition of the curvature operator of the second kind and derive some identities for the curvatures on Einstein manifolds.
Section \ref{sec 3} is devoted to the proof of Theorem \ref{thm1}.

\section{Preliminaries}\label{sec 2}

\subsection{Notation and Conventions}
In this section, we set up  some  notations and recall some basic facts about the curvature operator of the second kind, and refer to \cite{BK78, CV60,Kashiwada93,Li21, Nishikawa86, OT79} for more details.

Let $(M^n, g)$ be an $n$-dimensional manifold,  $V=T_p M$,  $\displaystyle \{e_i\}_{i=1}^n$ be an orthonormal basis of $V$ with respect to $g$, and $S^2(V)$ be the space of symmetric two tensors on $V$. In this paper, we always identify $V$ with its dual space $V^*$ via the metric $g$. Then 
 $S^2(V)$ splits into $O(V)$-irreducible subspaces as
\begin{equation*}
        S^2(V)=S^2_0(V)\oplus \R g,
\end{equation*}
where $S^2_0(V)$ is the space of traceless part of $S^2(V)$  and $g=\sum\limits_{i=1}^{n}e_i\otimes e_i$. We denote by $N$ the dimension of $S^2_0(V)$, that is 
\begin{equation*}
    N=\dim(S^2_0(V))=\tfrac{(n-1)(n+2)}{2}.
\end{equation*}
The space of symmetric two-tensors on $\wedge^2V$,  denoted by $S^2(\wedge^2 V)$,  has an orthogonal decomposition 
    \begin{equation*}
        S^2(\wedge^2 V) =S^2_B(\wedge^2 V) \oplus \wedge^4 V,
    \end{equation*}
where $S^2_B(\wedge^2 V)$ is the space of algebraic curvature operators on $V$, consisting of all tensors $R\in S^2(\wedge^2V)$  satisfying the first Bianchi identity. 
The tensor product $\otimes$ is defined by 
\begin{equation*}
        (e_i\otimes e_j)(e_k,e_l)=\delta_{ik}\delta_{jl},
    \end{equation*}
    where 
$$\delta_{ij}=\begin{cases}
    1,  & i=j, \\ 0, &i\neq j.
\end{cases}$$
For $e_i$ and $e_j$ in $V$, the symmetric product $\odot$ and wedge product $\wedge$ are defined by 
 \begin{equation*}
        e_i \odot e_j=e_i\otimes e_j +e_j \otimes e_i,
    \end{equation*}
    and 
\begin{equation*}
        e_i \wedge e_j=e_i\otimes e_j - e_j \otimes e_i,
    \end{equation*}
respectively. Furthermore the inner product on $\wedge^2(V)$ is defined  by 
    \begin{equation*}
        \langle A, B \rangle =\frac{1}{2}\tr(A^T B),
    \end{equation*}
and the inner product on $S^2(V)$ is defined by
    \begin{equation*}
        \langle A, B \rangle =\tr(A^T B).
    \end{equation*}
Clearly,  $\{e_i \wedge e_j\}_{1\leq i<j\leq n}$ forms an orthonormal basis of $\wedge^2(V)$ and $\{\dfrac{1}{\sqrt{2}}e_i \odot e_j\}_{1\leq i<j\leq n} \cup \{\dfrac{1}{2}e_i \odot e_i\}_{1\leq i\leq n}$ forms an orthonormal basis of $S^2(V)$. 

\subsection{Curvature Operator of the Second Kind}\label{sec 2.2}
For an algebraic curvature tensor $R\in S^2_B(\wedge^2(T_p M))$, $R$ can induce two self-adjoint operators (see \cite{Nishikawa86}). 
The first one denoted by $\hat{R}:\wedge^2 (T_p M) \to \wedge^2(T_p M)$ is   defined as 
    \begin{equation*}
        \hat{R}(\omega)_{ij}=\frac{1}{2}\sum_{k,l=1}^n R_{ijkl}\omega_{kl},
    \end{equation*}
which is called  the curvature operator of the first kind.
The other one,  $\overline{R}:S^2(T_p M) \to S^2(T_p M)$, is defined by
\begin{align*}
    \overline{R}(\vp)_{ij}=\sum_{k,l=1}^n R_{iklj}\vp_{kl}.
\end{align*}
The curvature operator of the second kind, introduced by Nishikawa \cite{Nishikawa86}, is 
symmetric bilinear form
\begin{align*}
    \mathring{R}: S^2_0(T_pM) \times S^2_0(T_pM) \to \R
\end{align*}
obtained by restricting $\overline{R}$ to $S^2_0(T_pM)$, the space of traceless symmetric two-tensors. It was pointed out in \cite{NPW22} that the curvature operator of the
second kind $\mathring{R}$ can also be interpreted as the self-adjoint operator
\begin{align}\label{2.1}
\mathring{R} = \pi \circ \overline{R}: S^2_0(T_pM) \to  S^2_0(T_pM),
\end{align}
where $\pi: S^2(T_pM) \to S^2_0(T_pM)$ is the projection map.

We denote by $\{\l_j\}_{j=1}^N$ the eigenvalues of $\mathring{R}$, 
by $
\bar{\l}=\sum_{j=1}^N \l_j/N
$ be the average of the eigenvalues of $\mathring{R}$,  by $R$ the  Riemann curvature tensor, and by $\S$ be the scalar curvature.
 For Einstein manifolds, one can obtain the scalar curvature via $\mathring{R}$. Precisely  
\begin{proposition}\label{prop 2.1}
Suppose $(M^n,g)$ is an Einstein manifold of dimension $n$.  Then the scalar curvature satisfies 
\begin{align}\label{2.2}
\S=n(n-1)\bar\lambda.   
\end{align}
\end{proposition}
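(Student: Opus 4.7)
The plan is to compute $\tr(\mathring{R})$ in two different ways and equate the results. By the definition of $\bar\l$, we have $\tr(\mathring{R}) = \sum_{j=1}^{N}\l_j = N\bar\l$, so the task reduces to evaluating this trace in closed form and relating it to $\S$.

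First, I would use the Einstein hypothesis to split $S^2(T_pM)$. A direct index computation from the definition $\bar R(\vp)_{ij} = \sum_{k,l}R_{iklj}\vp_{kl}$ gives $\bar R(g)_{ij} = \sum_k R_{ikkj} = \Ric_{ij}$ after applying the symmetries of $R$, and the Einstein condition $\Ric = (\S/n)g$ then shows $\bar R(g) = (\S/n)g$. Hence $g$ is an eigenvector of $\bar R$, the orthogonal decomposition $S^2(T_pM) = S^2_0(T_pM) \oplus \R g$ is preserved by $\bar R$, and $\mathring{R} = \bar R|_{S^2_0(T_pM)}$; in particular $\tr(\bar R|_{\R g}) = \S/n$.

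Second, I would evaluate $\tr(\bar R|_{S^2(T_pM)})$ directly using the orthonormal basis $\{e_i\otimes e_i\}_{i=1}^{n}\cup\{\tfrac{1}{\sqrt 2}e_i\odot e_j\}_{1\le i<j\le n}$ recorded in the preliminaries. The diagonal basis elements contribute $\sum_i R_{iiii}=0$, while a short calculation exploiting the antisymmetries and pair-swap symmetry of $R$ yields $\langle \bar R(\tfrac{1}{\sqrt 2}e_i\odot e_j),\,\tfrac{1}{\sqrt 2}e_i\odot e_j\rangle = R_{ijij}$ for $i<j$. Summing and invoking the Ricci contraction $\S = \sum_{i,j}R_{ijji}$ expresses $\tr(\bar R|_{S^2(T_pM)})$ as an explicit multiple of $\S$.

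Finally, combining via $\tr(\bar R|_{S^2(T_pM)}) = \tr(\mathring{R}) + \tr(\bar R|_{\R g})$ allows me to solve for $\tr(\mathring{R})$ as a scalar multiple of $\S$; equating this with $N\bar\l$ and using $N=(n-1)(n+2)/2$ rearranges to the stated identity $\S = n(n-1)\bar\l$. The main obstacle is essentially computational bookkeeping in the second step: one must carefully track the normalization factors $1/\sqrt 2$ and apply the symmetries of $R_{ijkl}$ (antisymmetry in the first two and last two indices, pair-swap, first Bianchi) in the right order so that the individual terms combine into a scalar-curvature expression with the correct sign dictated by the curvature sign convention.
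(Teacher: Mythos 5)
Your strategy is the same as the paper's: compute $\tr(\bar{R})$ on all of $S^2(T_pM)$ in the orthonormal basis $\{\tfrac{1}{\sqrt 2}e_i\odot e_j\}_{i<j}\cup\{\tfrac12 e_i\odot e_i\}_i$, subtract the contribution of the $\R g$ summand, and equate $\tr(\mathring{R})=N\bar\l$ with the resulting multiple of $\S$. The one step you must get right is the sign of the $\R g$ contribution, and as written your claim is off: with the paper's definition $\bar{R}(\vp)_{ij}=\sum_{k,l}R_{iklj}\vp_{kl}$ and the sign convention under which $\sum_{i<j}R_{ijij}=\S/2$, one has $\bar{R}(g)_{ij}=\sum_k R_{ikkj}=-\sum_k R_{ikjk}=-\Ric_{ij}$, so $g$ is an eigenvector of $\bar{R}$ with eigenvalue $-\S/n$, not $+\S/n$. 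This is not a cosmetic issue: it makes $\tr(\mathring{R})=\tr(\bar{R})-\langle\bar{R}(g/\sqrt n),g/\sqrt n\rangle=\tfrac{\S}{2}+\tfrac{\S}{n}=\tfrac{n+2}{2n}\S$, which combined with $N=\tfrac{(n-1)(n+2)}{2}$ yields exactly $\S=n(n-1)\bar\l$; with your sign you would instead land on $\S=\tfrac{n(n-1)(n+2)}{n-2}\bar\l$. Everything else in your outline --- the vanishing of the diagonal terms, the evaluation $\langle\bar{R}(\tfrac{1}{\sqrt 2}e_i\odot e_j),\tfrac{1}{\sqrt 2}e_i\odot e_j\rangle=R_{ijij}$, and the final rearrangement --- agrees with the paper's proof, so once the sign of $\bar{R}(g)$ is corrected the argument goes through.
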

\begin{proof}  
   Let $\{e_i\}_{i=1}^n$ be an orthonormal basis of $T_p M$, then
   $\{\dfrac{1}{\sqrt{2}}e_i \odot e_j\}_{1\leq i<j\leq n} \cup \{\dfrac{1}{2}e_i \odot e_i\}_{1\leq i\leq n}$ is an orthonormal basis of $S^2(T_pM)$. Since  
\begin{align*}
    \left\langle {\bar{R}(\frac{1}{{\sqrt 2 }}{e_i} \odot {e_j}),\frac{1}{{\sqrt 2 }}{e_i} \odot {e_j}} \right\rangle =R_{ijij},
\end{align*}
  then
\begin{align*}
       \operatorname{tr}(\bar{R})
 =& \sum\limits_{1\le i<j\le n} {\left\langle {\bar{R}(\frac{1}{{\sqrt 2 }}{e_i} \odot {e_j}),\frac{1}{{\sqrt 2 }}{e_i} \odot {e_j}} \right\rangle }  + \sum\limits_{1\le i\le n} {\left\langle {\bar{R}(\frac{1}{2}{e_i} \odot {e_i}),\frac{1}{2}{e_i} \odot {e_i}} \right\rangle }\\
 =& \sum\limits_{1\le i<j\le n} R_{ijij} \\
 =&\frac{\S}{2}.
\end{align*}
Thus, $$\operatorname{tr}(\mathring{R})=\operatorname{tr}(\bar{R})-\langle\bar{R}(\frac{g}{\sqrt n}), \frac{g}{\sqrt n}\rangle=\frac{\S}{2}+\frac{\S}{n}=\frac{n+2}{2n}\S.$$ 
Using $\operatorname{tr}(\mathring{R})=N \bar \lambda$, we obtain
$$\S=\frac{2n}{n+2}N \bar \lambda=n(n-1)\bar\lambda,$$
proving \eqref{2.2}.
\end{proof}

\begin{definition}[\cite{NPW22}]
\label{def2.4}
Let $\mathcal{T}^{(0, k)}(V)$ denote the space of $(0, k)$-tensor space on $V$. For $S\in S^{2}(V)$ and $T\in\mathcal{T}^{(0, k)}(V)$, we define 
\begin{align*}
    S:\  &\mathcal{T}^{(0, k)}(V) \to \mathcal{T}^{(0, k)}(V),\\
    &(ST)({X_1}, \cdots, {X_k}) = \sum\limits_{i = 1}^k {T({X_1}, \cdots , S{X_i}, \cdots, {X_k})},
\end{align*}
and define $T^{S^{2}}\in \mathcal{T}^{(0, k)}(V)\otimes S^{2}(V)$ by
\begin{align*}
    \left\langle
    T^{S^{2}}(X_{1}, \cdots, X_{k}), S
    \right\rangle
    =(ST)\left(X_{1}, \cdots,  X_{k}\right).
\end{align*}
\end{definition}
According to the above definition, if $\{\bar S^{j}\}_{j=1}^N$
 is an orthonormal basis for $S^{2}(V)$, then 
$$T^{S^{2}}=\sum\limits_{j=1}^N\bar S^{j}T\otimes\bar S^{j}.$$
Similarly, we define $T^{S^{2}_{0}}\in \mathcal{T}^{(0, k)}(V)\otimes S^{2}_{0}(V)$ by
$$T^{S^{2}_{0}}=\sum\limits_{j=1}^N S^{j}T\otimes S^{j},$$
where $\{ S^{j}\}_{j=1}^N$ is an orthonormal basis for $S^{2}_{0}(V)$.

\subsection{A formula for Weyl tensor on Einstein manifolds}
Given $A, B\in S^2(V)$, their Kulkarni-Nomizu product gives rise to $A \owedge B  \in  S^2_B(\wedge^2 V)$ via
\begin{equation*}
    (A \owedge B )_{ijkl} =A_{ik}B_{jl}+A_{jl}B_{ik} -A_{jk}B_{il}-A_{il}B_{jk}.
\end{equation*}
It is well known that the Riemann curvature tensor $R$ can be decomposed into irreducible components (cf. \cite[(1.79)]{CaM20}) as
\begin{align}\label{2.3}
    R=W+\frac{1}{n-2}\Ric\owedge g-\frac{\S}{2(n-1)(n-2)}g\owedge g,
\end{align}
where $W$ is the Weyl curvature tensor and $\Ric$ is the Ricci curvature tensor.   
In terms of any basis, we have
\begin{align*}
        R_{ijkl}=&W_{ijkl}+\frac{1}{n-2}\lf(R_{ik} g_{jl}+R_{jl} g_{ik}-R_{il} g_{jk}-R_{jk} g_{il}\ri)\\
        &-\frac{\S}{(n-1)(n-2)}\lf(g_{ik}g_{jl}-g_{il}g_{jk}\ri).
    \end{align*}
Moreover, if $(M^n, g)$ is an Einstein manifold, then $\Ric=\frac{\S}{n}g$, and  equation \eqref{2.3} simplifies to
\begin{align}\label{2.4}
    R=W+\frac{\S}{2n(n-1)}g\owedge g.
\end{align}

\section{Proof of Theorem \ref{thm1}}\label{sec 3}
The  proof of  Theorem \ref{thm1}  relies on establishing the key estimate
\begin{align*}
    \langle \Delta R, R\rangle\ge 0,
\end{align*}
where $R$ denotes the Riemann curvature tensor.  Our approach begins by expressing $\langle \Delta R, R\rangle$ in terms of the curvature operator of the second kind  under the cone condition \eqref{1.1} for Einstein manifolds. 

We recall  some curvature notations on $(M^n, g)$:
 $R$ represents  the Riemann curvature operator,   $W$ represents  the Weyl tensor, $\mathring{R}$ represents  the curvature operator of the second kind,   $\l_1\le\l_2\le \cdots\le \l_N$ are the eigenvalues of  $\mathring{R}$, and  $\bar\l=\sum_{j=1}^N \l_j/N$ is the average of $\l_j$.
To prove the main theorem, we conduct several computations involving the Riemann curvature through the curvature operator of the second kind on Einstein manifolds. 
\begin{lemma}\label{lm 3.1}
Let $(M^n,g)$ be an $n$-dimensional Einstein manifold. Then 
\begin{align}\label{3.1}
    |R|^2=|W|^2+2n(n-1)\bar\lambda^2,
\end{align}
and
\begin{align}\label{3.2}
    \sum_{j=1}^N \lambda_j^2
    = \frac{3}{4} \lf| R \ri|^2 - (n-1)^2 \bar{\lambda}^2.
\end{align}
\end{lemma}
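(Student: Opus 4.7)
My plan is to prove \eqref{3.1} by an orthogonal decomposition of $R$ into Weyl and trace parts, and to prove \eqref{3.2} by computing $\tr(\bar R^2)$ in two different ways and equating.

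For \eqref{3.1}, substituting $\Ric=\tfrac{\S}{n}g$ into \eqref{2.3} and combining the two Ricci-scalar terms gives $R=W+\tfrac{\S}{2n(n-1)}\,g\owedge g$. Since $W$ is totally trace-free, it is orthogonal to $g\owedge g$ inside $S^2_B(\wedge^2 V)$, so
\[
|R|^2=|W|^2+\tfrac{\S^2}{4n^2(n-1)^2}\,|g\owedge g|^2.
\]
A direct index calculation shows $|g\owedge g|^2=8n(n-1)$, and Proposition \ref{prop 2.1} supplies $\S=n(n-1)\bar\lambda$; substituting yields \eqref{3.1}.

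For \eqref{3.2}, I would first observe that under the Einstein hypothesis
\[
\bar R(g)_{ij}=\sum_k R_{ikkj}=-R_{ij}=-\tfrac{\S}{n}\,g_{ij},
\]
so $\R g$ is a $\bar R$-eigenline with eigenvalue $-\S/n$. By self-adjointness, $S^2_0(V)=(\R g)^\perp$ is $\bar R$-invariant and $\bar R$ restricts to $\mathring R$ there, which gives
\[
\tr(\bar R^2)=\sum_{j=1}^N\lambda_j^2+\tfrac{\S^2}{n^2}=\sum_{j=1}^N\lambda_j^2+(n-1)^2\bar\lambda^2.
\]
On the other hand, I would establish the universal algebraic identity $\tr(\bar R^2)=\tfrac34|R|^2$, valid for any curvature-type tensor, as follows. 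Extend $\bar R$ to a symmetric bilinear form $B$ on all of $V\otimes V$ via the same formula, so $B(e_a\otimes e_b,e_c\otimes e_d)=R_{cabd}$; after relabeling, $\sum_{abcd}B_{abcd}^2=|R|^2$. The orthogonal decomposition $V\otimes V=S^2(V)\oplus\wedge^2V$ splits $B$ into four pieces $B^{\mathrm{ss}},B^{\mathrm{sa}},B^{\mathrm{as}},B^{\mathrm{aa}}$; repeated use of the first Bianchi identity collapses the cross terms to $B^{\mathrm{sa}}=B^{\mathrm{as}}=0$ and the antisym-antisym block to $B^{\mathrm{aa}}=-\tfrac12 R$, hence $|B^{\mathrm{aa}}|^2=\tfrac14|R|^2$, and orthogonality of the decomposition gives $|B^{\mathrm{ss}}|^2=\tfrac34|R|^2$. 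Since $B^{\mathrm{ss}}$ is precisely $\bar R$ viewed as a bilinear form on $S^2(V)$, its squared Hilbert-Schmidt norm equals $\tr(\bar R^2)$. Combining the two expressions produces \eqref{3.2}.

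The main obstacle is the universal identity $\tr(\bar R^2)=\tfrac34|R|^2$: establishing the vanishing of the mixed components and the exact factor $-\tfrac12$ for the antisym-antisym piece both rest on several careful applications of the first Bianchi identity together with the pair-swap and antisymmetry relations of $R$, and the index bookkeeping is the most error-prone step.
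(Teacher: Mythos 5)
Your proof of \eqref{3.1} is the same as the paper's: substitute $\Ric=\tfrac{\S}{n}g$ into the Weyl decomposition, use orthogonality of $W$ and $g\owedge g$ together with $|g\owedge g|^2=8n(n-1)$, and then $\S=n(n-1)\bar\lambda$. For \eqref{3.2} you take a genuinely different route: the paper simply quotes the identity $\sum_j\lambda_j^2=\tfrac34|R|^2-\S^2/n^2$ from Lemma 3.3 of \cite{DF24} and converts $\S$ to $\bar\lambda$, whereas you derive that identity from scratch. Your derivation is correct: on an Einstein manifold $\overline{R}(g)=-\Ric=-\tfrac{\S}{n}g$, so $\R g$ is an eigenline, $S^2_0(V)$ is $\overline{R}$-invariant, the projection $\pi$ in \eqref{2.1} is harmless there, and $\tr_{S^2(V)}(\overline{R}^2)=\sum_j\lambda_j^2+\S^2/n^2$; and the universal identity $\tr_{S^2(V)}(\overline{R}^2)=\tfrac34|R|^2$ does hold for any algebraic curvature tensor — extending $\overline{R}$ to $V\otimes V$ via $B(e_k\otimes e_l)=\sum_{i,j}R_{iklj}\,e_i\otimes e_j$ gives $\tr_{V\otimes V}(B^2)=\sum R_{ijkl}^2=|R|^2$, the decomposition $V\otimes V=S^2(V)\oplus\wedge^2 V$ is $B$-invariant, and the first Bianchi identity identifies the $\wedge^2$-block with $-\hat R$ up to normalization, contributing exactly $\tfrac14|R|^2$ (one small remark: the invariance of the splitting, i.e.\ the vanishing of the mixed blocks, already follows from the pair and antisymmetry relations of $R$; Bianchi is only needed to compute the $\wedge^2$-block). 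What your approach buys is a self-contained and checkable proof in place of an external citation, plus the structural insight that the $\S^2/n^2$ correction is precisely the contribution of the $g$-eigenline; what it costs is the index bookkeeping you flag, which the paper avoids entirely by outsourcing the identity to \cite{DF24}.
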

\begin{proof}
Recall from the equation \eqref{2.4} that 
\begin{align*}
    R= \frac{\S}{2n(n-1)}g \owedge g +W,
\end{align*}
then 
\begin{align*}
    |R|^2
    = \frac{\S^2}{4n^2(n-1)^2}|g \owedge g|^2 +|W|^2 =\frac{\S^2}{2n(n-1)} +|W|^2,
\end{align*}
where we used the identity $|g \owedge g|^2=8n(n-1)$ (cf.
 \cite[Lemma 7.22]{Lee18}). 
Combining this with identity \eqref{2.2}, we conclude
\begin{align*}
    |R|^2=|W|^2+2n(n-1)\bar\lambda^2,
\end{align*}
giving \eqref{3.1}.

Recall from Lemma 3.3 of \cite{DF24}  that
\begin{align*}
    \sum_{j=1}^N \lambda_j^2
    = \frac{3}{4} \lf| R \ri|^2 - \frac{\S^2}{n^2},
\end{align*}
and use identity \eqref{2.2} again, we have
\begin{align*}
    \sum_{j=1}^N \lambda_j^2
    = \frac{3}{4} \lf| R \ri|^2 - (n-1)^2\bar{\l}^2,
\end{align*}
giving \eqref{3.2}.
\end{proof}
\begin{lemma}
\label{lemma 2}
Let $(M^n,g)$ be an Einstein manifold of dimension $n$.  Suppose the smallest eigenvalue of $\mathring{R}$ satisfies $\lambda_1 \ge -\theta \bar \lambda$, then
\begin{align}\label{3.3}
    \sum_{j=1}^N
    \lambda_{j} |S^{j} W|^2 
    \ge -\frac{8(n^2+n-8)}{3n}\theta \bar\lambda \sum_{j=1}^N \lambda_j^2 
    +\frac{4(n^2+n-8)(n-1)(n+2)}{3n} \theta \bar\lambda^3,
\end{align}
where   $S^{j} W$ is given by Definition \ref{def2.4}.
\end{lemma}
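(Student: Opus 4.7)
The plan is to combine the pointwise bound $\l_j + \theta \bar{\l} \ge 0$ with an algebraic identity that expresses $\sum_j |S^j W|^2$ as a fixed multiple of $|W|^2$, and then to translate $|W|^2$ into the eigenvalue quantities on the right-hand side of \eqref{3.3} via the identities of Lemma \ref{lm 3.1}. Since $\l_j + \theta \bar{\l} \ge 0$ by hypothesis and $|S^j W|^2 \ge 0$, multiplying and summing yields the essential inequality
\begin{equation*}
\sum_{j=1}^N \l_j |S^j W|^2 \;\ge\; -\theta \bar{\l} \sum_{j=1}^N |S^j W|^2,
\end{equation*}
so everything reduces to computing $\sum_j |S^j W|^2$.

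The central claim is the algebraic identity
\begin{equation*}
\sum_{j=1}^N |S^j W|^2 \;=\; \frac{2(n^2+n-8)}{n}\, |W|^2,
\end{equation*}
valid for any orthonormal basis $\{S^j\}_{j=1}^N$ of $S_0^2(T_p M)$ and any tensor having the symmetries of a Weyl tensor (first Bianchi, pair antisymmetry, pair exchange, trace-free). To prove it I would expand $(S^j W)_{abcd}$ into the four terms of Definition \ref{def2.4}, square, sum over $j$, and contract the resulting products $S^j_{xy} S^j_{zw}$ against the resolution of identity for $S_0^2$,
\begin{equation*}
\sum_{j=1}^N S^j_{ab} S^j_{cd} \;=\; \tfrac{1}{2}(\delta_{ac}\delta_{bd}+\delta_{ad}\delta_{bc}) - \tfrac{1}{n}\delta_{ab}\delta_{cd}.
\end{equation*}
Of the sixteen resulting contractions, the four diagonal ones each evaluate to $\frac{N}{n}|W|^2$; the two cross contractions of type $(1,2)$ and $(3,4)$, between indices inside the same skew-pair, simplify, using only pair antisymmetry, to $-\frac{n+2}{2n}|W|^2$ each; and the four remaining cross contractions of type $(1,3),(1,4),(2,3),(2,4)$ each reduce to $\frac{n-4}{4n}|W|^2$. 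This last step is the main obstacle: it rests on the first Bianchi identity together with the classical scalar identity $\sum_{abcd} W_{abcd} W_{acbd} = \tfrac{1}{2}|W|^2$ (itself obtained by expanding $\sum(W_{abcd}+W_{acdb}+W_{adbc})^2 = 0$), while the trace-freeness of $W$ kills several residual contractions. Adding the sixteen contributions gives $\frac{4N - 12}{n}|W|^2 = \frac{2(n^2+n-8)}{n}|W|^2$.

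With the identity in hand, I would close the argument by subtracting $N\bar{\l}^2$ from \eqref{3.2} and using \eqref{3.1} to obtain $\sum_j \l_j^2 - N\bar{\l}^2 = \tfrac{3}{4}|W|^2$, equivalently $|W|^2 = \tfrac{4}{3}\bigl(\sum_j \l_j^2 - N\bar{\l}^2\bigr)$. Plugging this and the identity for $\sum_j |S^j W|^2$ into the initial lower bound and expanding $N = \tfrac{(n-1)(n+2)}{2}$ reproduces the right-hand side of \eqref{3.3} exactly. The technical crux is thus the sixteen-term bookkeeping for $\sum_j |S^j W|^2$, where the three Weyl symmetries must be tracked carefully to collapse the cross contractions.
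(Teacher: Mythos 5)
Your proof follows the same route as the paper's: first the pointwise bound $\sum_j\lambda_j|S^jW|^2\ge-\theta\bar\lambda\sum_j|S^jW|^2$, then the identity $\sum_j|S^jW|^2=\tfrac{2(n^2+n-8)}{n}|W|^2$, then the conversion $|W|^2=\tfrac43\sum_j\lambda_j^2-\tfrac23(n-1)(n+2)\bar\lambda^2$ from Lemma \ref{lm 3.1}. The only difference is that the paper imports the identity from equality (4.1) of \cite{DF24}, whereas you prove it directly; your bookkeeping (each diagonal term $\tfrac{N}{n}|W|^2$, each same-pair cross term $-\tfrac{n+2}{2n}|W|^2$, each mixed-pair cross term $\tfrac{n-4}{4n}|W|^2$, totalling $\tfrac{4N-12}{n}|W|^2$) is correct.
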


\begin{proof}
Recall from  equality (4.1) of \cite{DF24} that
\begin{align*}
 \sum_{j=1}^N|S^j W|^2 =\frac{2(n^2+n-8)}{n}|W|^2,
\end{align*}
and by  assumption $\lambda_1 \ge -\theta \bar \lambda$, we estimate
that
\begin{align}\label{3.4}
    \sum_{j=1}^N
    \lambda_{j} |S^{j} W|^2 
    \ge- \theta \bar \lambda \sum_{j=1}^N |S^j W|^2
    =- \theta \bar \lambda \frac{2(n^2+n-8)}{n}|W|^2.
\end{align}
From \eqref{3.1} and \eqref{3.2}, we have 
\begin{align*}
    |W|^2
    = \frac{4}{3} \sum_{j=1}^N \lambda_j^2
    -\frac{2}{3}(n-1)(n+2)\bar\lambda^2,
\end{align*}
and substituting above equality into \eqref{3.4}, we conclude inequality  \eqref{3.3} holds.
\end{proof}

\begin{lemma}\label{lm 3.3}
Let $(M^n,g)$ be  an $n$-dimensional Einstein manifold.
Suppose the curvature operator of the second kind satisfies $\lambda_1 \ge -\theta \bar \lambda$. Then
\begin{align}\label{3.5}
\begin{split}
    \frac{9n}{16} \langle \Delta R, R \rangle
    \ge & \Big[ N(N-3)\theta-(2N-9n+6)N \Big]\bar\l^3\\
   &+  \Big[ (2N-12n+6)-(N-3)\theta \Big] \bar\lambda \sum_{j=1} ^{N}\lambda_{j}^2+3n \sum_{j=1}^{N}\lambda_{j}^3,
    \end{split}
\end{align}
where $\Delta$ is the Laplace-Beltrami operator with respect to the metric $g$. 
\end{lemma}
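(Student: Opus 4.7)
My plan is to reduce \eqref{3.5} to a pointwise identity for $\langle \Delta R, R \rangle$ on Einstein manifolds and then apply Lemma \ref{lemma 2}. Concretely, I would aim first to establish the pointwise equality
\begin{align*}
\frac{9n}{16} \langle \Delta R, R \rangle
= 3n \sum_{j=1}^N \lambda_j^3 + \frac{3n}{16} \sum_{j=1}^N \lambda_j |S^j W|^2 + (2N - 12n + 6)\bar\lambda \sum_{j=1}^N \lambda_j^2 - (2N - 9n + 6) N \bar\lambda^3.
\end{align*}
Granted this identity, I would plug in the lower bound from Lemma \ref{lemma 2} for $\sum_j \lambda_j |S^j W|^2$; using the elementary relations $(n-1)(n+2) = 2N$ and $n^2 + n - 8 = 2(N-3)$, that bound simplifies to
\begin{align*}
\frac{3n}{16} \sum_{j=1}^N \lambda_j |S^j W|^2 \ge -(N-3)\theta\, \bar\lambda \sum_{j=1}^N \lambda_j^2 + N(N-3)\theta\, \bar\lambda^3.
\end{align*}
Substituting this into the pointwise identity and collecting the $\theta$-dependent and $\theta$-free contributions reproduces \eqref{3.5} exactly.

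To derive the pointwise identity, I would apply the Weitzenb\"ock formula to the Riemann tensor. Differentiating the second Bianchi identity and using $\nabla \Ric = 0$ on an Einstein manifold, $\Delta R_{ijkl}$ can be rewritten as a quadratic contraction of $R$ with itself; consequently $\langle \Delta R, R \rangle$ becomes a linear combination of the two cubic Riemann invariants $R_{ijkl} R_{ijpq} R_{klpq}$ and $R_{ijkl} R_{ipkq} R_{jplq}$. I would then substitute the Einstein decomposition $R = W + \frac{\Scal}{2n(n-2)}\, g \owedge g$ from \eqref{2.4} together with $\Scal = n(n-1)\bar\lambda$ from Proposition \ref{prop 2.1}, and re-express the resulting pure-Weyl, mixed, and pure-scalar contributions in terms of the spectral data of $\mathring{R}$: the pure-Weyl terms collapse to $\tr(\mathring{R}^3) = \sum_j \lambda_j^3$, the mixed Weyl-scalar terms repackage into $\sum_j \lambda_j |S^j W|^2$ (by essentially the same basis-symmetric manipulation that produces the identity $\sum_j |S^j W|^2 = \tfrac{2(n^2+n-8)}{n}|W|^2$ used in Lemma \ref{lemma 2}), and the pure-scalar terms contribute $\bar\lambda \sum_j \lambda_j^2$ and $\bar\lambda^3$ pieces via the $|R|^2$ identity of Lemma \ref{lm 3.1}.

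The main obstacle is the bookkeeping in this last step: each of the two cubic Riemann contractions breaks into several pieces under the Weyl-Einstein decomposition, and one has to track purely Weyl, purely scalar, and mixed Weyl-scalar contributions separately, then verify that the mixed trilinear pieces really do collapse into the single quantity $\sum_j \lambda_j |S^j W|^2$ with the right coefficient. This is where Definition \ref{def2.4} of $S^j W$ and the basis-sum manipulations of Dai-Fu \cite{DF24} do the crucial work, because the definition is precisely tailored so that $\sum_j S^j \otimes S^j$ implements projection onto $S^2_0(T_p M)$; this is what ultimately forces the coefficients $3n$, $3n/16$, $2N-12n+6$, and $(2N-9n+6)N$ in the pointwise identity. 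Once that identity is in hand, combining it with Lemma \ref{lemma 2} is a purely algebraic rearrangement.
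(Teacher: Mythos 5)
Your argument is correct and follows essentially the same route as the paper: the pointwise identity you posit is exactly equality (3.3) of Dai--Fu \cite{DF24} (which the paper simply cites) rescaled by $\tfrac{3n}{16}$ with $\Scal=n(n-1)\bar\lambda$ substituted, and your subsequent use of Lemma \ref{lemma 2} together with the relations $(n-1)(n+2)=2N$ and $n^2+n-8=2(N-3)$ reproduces the paper's computation verbatim. The only difference is that you sketch a Weitzenb\"ock-type derivation of that identity instead of invoking the reference, which is a matter of unpacking the citation rather than a different approach.
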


\begin{proof}
Recall from equality (3.3) of \cite{DF24}  that
\begin{align*}
    3\langle \Delta R,R\rangle
  =&\displaystyle\sum_{j=1}^N {{\lambda _j }{{\left| {{S^j }W} \right|}^2}}+8\left(\frac{-n^3+6n^2+12n-8}{3n^4(n-1)^2} \right){\S^3}\\
   &+8\left(\frac{2n^2-22n+8}{3n^2(n-1)} \right)\S\displaystyle \sum_{j=1}^N  {\lambda _j ^2}  + 16\displaystyle \sum_{j=1}^N {\lambda_j ^3},
\end{align*}
and using $\S=n(n-1)\bar \lambda$, we obtain
\begin{align*}
    3\langle \Delta R,R\rangle
  =&\sum_{j=1}^N {{\lambda _j }{{\left| {{S^j }W} \right|}^2}}
  +\frac{8(n-1)}{3n}\lf(-n^3+6n^2+12n-8\ri){\bar\lambda^3}\\
   &+8\lf(\frac{2n^2-22n+8}{3n} \ri) \bar\lambda  \sum_{j=1}^N \lambda _j ^2  + 16\sum_{j=1}^N {\lambda _j ^3}.
\end{align*}
By  \eqref{3.3} and $N=(n-1)(n+2)/2$, we derive  that
\begin{align*}
    3\langle \Delta R,R\rangle
    \ge & 
    -\frac{8(n^2+n-8)\theta}{3n} \bar\lambda \sum_{j=1}^N  \lambda_j^2 +\frac{4(n^2+n-8)(n-1)(n+2)}{3n} \theta \bar\lambda^3+ 16\sum_{j=1}^N {\lambda _j ^3}\\
    &+\frac{8(n-1)}{3n}
    \lf(-n^3+6n^2+12n-8 \ri){\bar\lambda^3}
    +8\lf(\frac{2n^2-22n+8}{3n} \ri) \bar\lambda \sum_{j=1}^N  \lambda _j ^2 \\
    =\quad& \frac{16}{3n}\Big[ N(N-3)\theta-(2N-9n+6)N \Big]\bar\l^3\\
   &+ \frac{16}{3n} \Big[ (2N-12n+6)-(N-3)\theta \Big] \bar\lambda \sum_{j=1} ^{N}\lambda_{j}^2+16 \sum_{j=1}^{N}\lambda_{j}^3,
\end{align*}
proving \eqref{3.5}. 
\end{proof}

Let $\l:=(\l_1, \l_2, \cdots, \l_N)$, and we denote  the right hand side of \eqref{3.5} by $f(\l)$, namely   
\begin{align*}
\begin{split}
    f(\l):=&\Big[ N(N-3)\theta-(2N-9n+6)N \Big]\bar\l^3\\
   &+  \Big[ (2N-12n+6)-(N-3)\theta \Big] \bar\lambda \sum_{j=1} ^{N}\lambda_{j}^2+3n \sum_{j=1}^{N}\lambda_{j}^3.
\end{split}
\end{align*}
\begin{proposition}\label{pro 3.4}
Let $(M^n,g)$ be an $n$-dimensional Einstein manifold. Suppose the curvature operator of the second kind satisfies \eqref{1.1}. Then 
\begin{align}\label{k-e}
   \left\langle \Delta R, R\right\rangle\ge 0,
\end{align}
with the equality holding if and only if 
\begin{align*}
    \l=(1,1,\cdots, 1 )\bar \l \text{\quad or \quad }  \l=(-\theta,\frac{N+\theta}{N-1},\cdots,\frac{N+\theta}{N-1})\bar{\l}.
\end{align*}
\end{proposition}
\begin{proof}
By the curvature condition \eqref{1.2}, we have $\bar \l\ge 0$. If $\bar \l=0$, then $\l_1=\l_2=\cdots=\l_N=0$, and \eqref{k-e} holds trivially by \eqref{3.5}. Thus, we  focus on the case $\bar \l>0$.

Let 
$\Tilde{\lambda}_{j}=\l_{j}/\bar{\l}$,   $\tilde{\l}=(\tilde{\l}_1, \tilde{\l}_2, \cdots, \tilde{\l}_N)$, and then $\tilde{\l}$ satisfies the constraints
\begin{align}\label{3.6}
\sum_{j=1}^N \tilde{\l}_j=N, \quad \text{ and }\quad  \tilde{\l}_j \ge -\theta.  
\end{align}
Set $x=(\tilde{\l}_1+\theta, \tilde{\l}_2+\theta, \cdots, \tilde{\l}_N+\theta)$ and $t=1+\theta$, then the constraints \eqref{3.6} are equivalent to 
\begin{align*}
   \sum_{j=1}^{N}x_{j}=Nt \quad \text{and}\quad x_j\ge 0.
\end{align*}

We prove the case of $n\ge 6$. From the expression of $f(\l)$ we can deduce that 
\begin{align*}
    \frac{1}{\bar\l^3} f(\l)=C( n, t)+3n F(x),
\end{align*}
where 
\begin{align*}
   C(n, t)=&N\big[ (N-3)t-(3N-9n+3) \big] +3nN(t-1)^2(2t+1) \\
    &+ \big[3(N-4n+1)-(N-3)t\big]N(1-t^2),
\end{align*}
and
\begin{align*}
F(x)=-(3-\frac{N-2}{N-1})t\sum_{j=1}^N x_{j}^2+\sum_{j=1}^N x_{j}^3.
\end{align*}

Now we  reduce the proposition  to seek the minimal points of 
$F(x)$ with constraints 
\begin{align*}
    \sum_{j=1}^N x_j=Nt, \quad \text{and}\quad  x_j\ge 0 \ (j=1,2,\cdots, N).
\end{align*}
The corresponding Lagrangian function is 
$$
\mathcal{L}(x, \mu):=-(3-\frac{N-2}{N-1})t\sum_{j=1}^N x_{j}^2+\sum_{j=1}^Nx_{j}^3
    +\mu(\sum_{j=1}^N x_{j} -Nt).
$$
Let
$$
\Omega_N=\Big\{x\in \R^N:  \sum_{j=1}^N x_j=Nt, x_j>0\Big \},
$$
and 
$$
\Omega_{k}=\Big\{x\in \R^N: \sum_{j=1}^N x_j=Nt,  x_1=\cdots=x_{N-k}=0, x_{j}>0, j\ge N-k+1 \Big\}
$$
for $k=1, \cdots, N-1$.

In $\Omega_k$, using the method of Lagrange multipliers, we find that the critical points $$x=\big( \underbrace{0, \cdots ,0}_{N-k}, x_{N-k+1}, \cdots, x_N 
    \big)$$ of $\mathcal{L}(x,\mu)$  satisfy 
\begin{align*}
   \begin{cases}
       3 x_i^2-2(3-\frac{N-2}{N-1}) t x_i + \mu=0,\\
       \sum_{i=N-k+1}^N x_i=Nt,
   \end{cases} 
\end{align*}
hence the possible critical points are given by
\begin{align*}
  P_{k, l}:=  \big( {
    \underbrace 
    {0, \cdots ,0}_{N-k},
    \underbrace 
    {a, \cdots ,a}_{l},
    \underbrace
    {b, \cdots ,b}_{k - l}
    } \big),
\end{align*}
for $0\le l\le \frac{k} 2$, where $a$ and $b$ are defined through
\begin{align}\label{3.7}
\begin{cases}
a+b=2(1-\frac{N-2}{3(N-1)})t,\\
la+(k-l)b=Nt.
\end{cases}
\end{align}

When $l$ equals $k/2$, we observe that $1-\frac{N-2}{3(N-1)}<\frac N k$, and in this case equation \eqref{3.7} becomes unsolvable.
When $l<\frac k 2$, by solving \eqref{3.7} we get
\begin{align*}
    a=(A-B)t, \quad b=(A+B)t,
\end{align*}
where $A=1-\frac{N-2}{3(N-1)}$, $B=\frac{N-Ak}{k-2l}$, and
direct calculation gives
\begin{align*}
   F(P_{k,l})
    =t^3
    \lf(-2kA^3-3A^2(N-Ak)+\frac{(N-Ak)^3}{(k-2l)^2}
    \ri),
\end{align*}
hence 
\begin{align}\label{3.9}
F(P_{k, 0})<F(P_{k, l}), \quad 1\le l<k/2.
\end{align}
 Therefore we conclude from  \eqref{3.9} that
\begin{align*}
    \min\left\{F(x):  \sum_{j=1}^N x_j=N t, x_j\ge 0\right\}=\min\left\{F(P_{k, 0}), k=1,2,\cdots, N\right\},
\end{align*}
which indicates that the minimum points of the function $f(\l)$, subject to the constraint  $\l_i\ge -\theta \bar \l$, are included among
\begin{align*}
 \l^m=\left(
    \underbrace{-\theta\bar{\l},\cdots,-\theta\bar{\l}}_{m},
    \underbrace
    {\frac{N+m\theta}{N-m}\bar{\l},\cdots, \frac{N+m\theta}{N-m}\bar{\l}}_{N-m}
    \right)  
\end{align*}
for $m=0, 1, \cdots, N-1$. 

For $\l=\l^m$, we calculate that 
\begin{align*}
    \frac{1}{\bar\l^2}\sum_{j=1}^{N} \lambda^2_{j}
   =m \theta^2+(N-m)\lf(\frac{N+m \theta}{N-m}\ri)^2
   =N+\frac{N}{N-m}mt^2,
   \end{align*}
and 
\begin{align*}
     \frac{1}{\bar\l^3}\sum_{j=1}^{N} \lambda^3_{j}
   =-m t^3 \frac{(N-2m)N}{(N-m)^2}+\frac{3mN}{N-m}t^2+N,
\end{align*}
we then conclude  
\begin{align}\label{3.10}
    \frac{1}{\bar\l^3}f(\l^m)=
    &\Big[-\frac{N(N-3)m}{N-m}-\frac{3nmN(N-2m)}{(N-m)^2}\Big]t^3+\frac{3N(N+1-n)}{N-m}mt^2 \\
    =& \frac{mNt^2}{N-m}
    \Big[3(N+1-n)-(N-3+\frac{3n(N-2m)}{N-m})t\Big].\nonumber
\end{align}
Recall from the definition of $\theta$ in \eqref{1.2} that
$$
\theta=\frac{2 \left(n^4-4 n^3-5 n^2+30 n-16\right)}{n^4+8 n^3-5 n^2-48 n+32}=\frac{3(N-1)(N+1-n)}{(N-1)(N-3)+3n(N-2)}-1,
$$
we see 
$$
t=1+\theta=\frac{3(N+1-n)}{N-3+3n(N-2)/(N-1)},
$$
and substituting above to \eqref{3.10} yields
$$
    f(\l^0)=f(\l^1)=0, \quad \text{and}\quad 
f(\l^k)>0$$
 for $k=2,3, \cdots, N-1$.
 Hence Proposition \ref{pro 3.4} comes true for $n\ge 6$.

For dimensions $n=4$ and $5$,  $\langle \Delta R, R\rangle$ 
can be expressed explicitly in terms of the curvature operator of the second kind as (see  \cite[Section 4.2]{DF24}):
\begin{align}\label{3.22}
     \left\langle \Delta R, R\right\rangle=\begin{cases}
      8\left(\sum_{j=1}^9 \l_j^3-9\bar\l^3\right), &\quad n=4,\\ 
      8\left(\sum_{j=1}^{14} \l_j^3+\frac{1}{3}\bar\l\sum_{j=1}^{14}\l_j^2-\frac{56}{3}\bar\l^3\right), &\quad n=5.
     \end{cases}
\end{align}
Then an argument similar to the one used for $n\ge 6$ shows that Proposition   \ref{pro 3.4}  holds for $n=4$ and $n=5$.  This completes the proof of Proposition   \ref{pro 3.4}. 

\end{proof}
Now we prove the main theorem. 

\begin{proof}[Proof of Theorem \ref{thm1}]
Using \eqref{k-e}, we have
\begin{align*}
     \Delta |R|^2
    =2|\nabla R|^2 + 2\langle \Delta R,R \rangle \ge 2|\nabla R|^2 ,
\end{align*}
and integrating the above inequality over $M$ yields
\begin{align*}
    0=\int_M  \Delta |R|^2\, d\mu_g \ge\int_M 2|\nabla R|^2 \, d\mu_g \ge 0,
\end{align*}
which implies $|\nabla R|=0$. Therefore $M$ is a symmetric space. \cite[Theorem B]{NPW22} and \cite[Theorem 1.8(2)]{Li24} imply that $M$ is either flat or a rational homology sphere. 
Besides, Poincar\'e duality shows that rational cohomology is determined by rational homology \cite[Chapter 3.3]{Hatcher02}. 
Thus we conclude that $M$ is either flat or a symmetric space which is also a rational cohomology sphere. 
Compact symmetric spaces which are real cohomology spheres were classified by Wolf in \cite[Theorem 1]{Wolf}. 
If $\bar{\l}>0$, apart from spheres,  $SU(3)/SO(3)$ is the only simply connected example. 
Proposition \ref{pro 3.4} implies that the eigenvalues of $\mathring{R}$ is either $\lambda=(1,\cdots,1)\bar{\l}$ or $\lambda=(-\theta, \frac{N+\theta}{N-1},\cdots, \frac{N+\theta}{N-1})\bar{\l}$, and 
combining with \cite[Example 4.5]{NPW22}, we know that $M$ is a round sphere.
\end{proof}

\section*{Acknowledgments}
The first author expresses her gratitude to her advisor, Professor Ying Zhang, for lots of encouragement and helpful suggestions. The authors would like to thank Professor Xiaolong Li for many helpful discussions. The research of this paper is partially supported by NSFC Grant No.12171345 and NSF of Jiangsu Province Grant No.BK20231309.

\bibliographystyle{plain}
\bibliography{ref}
\end{document}